\title[Abelian transvection groups]{Invariant theory of abelian transvection groups}
\author{Abraham Broer}
\address{D\'epartement de math\'ematiques\\ et de statistique\\
Universit\'e de Montr\'eal\\
C.P. 6128, succursale Centre-ville\\
Montr\'eal (Qu\'ebec)\\
Canada H3C 3J7}
\email{broera@DMS.UMontreal.CA}
\date\today
\thanks{The author wishes to thank Jianjun Chuai for some interesting discussions on the
Hilbert ideal}
\theoremstyle{plain}
\newtheorem{lemma}{Lemma}
\newtheorem{proposition}{Proposition}
\newtheorem*{theorem}{Theorem}
\newtheorem*{corollary}{Corollary}
\theoremstyle{remark}
\newtheorem{example}{Example} 
\def\F{{\mathbb F}}                            %Galois field
\def\Z{{\mathbb Z}}                            %integers
\def\dim{\mathop{\operatorname{dim}}\nolimits} %dimension
\def\GL{\mathop{\operatorname{GL}}\nolimits}
\def\Hilb{{\mathfrak H}} %Hilbert ideal
\def\Tr{\mathop{\operatorname{Tr}}\nolimits}   %transfer or trace
\def\Tau{{\mathcal T}}
\begin{document} 
\maketitle
\begin{abstract}
Let $G$ be a finite group acting linearly on the vector space $V$ over a field of
arbitrary characteristic. The action is called {\em coregular} if the 
invariant ring is generated by algebraically independent homogeneous invariants and
 the {\em direct summand property} holds if 
there is a  surjective $k[V]^G$-linear map $\pi:k[V]\to k[V]^G$.

The following Chevalley--Shephard--Todd type theorem is proved.
Suppose $G$ is abelian, then the action is coregular if and only if  $G$ is generated by
pseudo-reflections and the direct summand property holds.
\end{abstract}

\section*{Introduction}
Let  $V$ be a  vector space of dimension $n$ over a field $k$. A linear transformation $\tau:V\to V$ is called a {\em pseudo-reflection}, if its fixed-points space $V^\tau=\{v\in V; \tau(v)=v\}$ is a linear subspace
of codimension one. Let $G<\GL(V)$ be a finite group acting linearly on $V$. Then $G$ acts by algebra automorphisms on the coordinate ring $k[V]$, which is by definition
the symmetric algebra on the dual vector space $V^*$. 
We shall say that $G$ is a {\em pseudo-reflection group}
if $G$ is generated by pseudo-reflections; it is called a {\em non-modular } group if $G$ is not divisible
by the characteristic of the field. The action is called {\em coregular} if the 
invariant ring is generated by $n$ algebraically independent homogeneous invariants.
Finally we say that the {\em direct summand property} holds if 
there is a  surjective $k[V]^G$-linear map $\pi:k[V]\to k[V]^G$ respecting the gradings.

For a non-modular group the direct summand property always holds, because in that case
we can take the {\em transfer} $\Tr^G$ as projection, defined by
$$\Tr^G: k[V]\to k[V]^G:\ \Tr^G(f)=\sum_{\sigma\in G}\sigma(f);$$
since for any invariant $f$ we have $\Tr^G({|G|^{-1}f)}=f$.
A theorem of Serre~\cite[Theorem 6.2.2]{Benson} implies that if the action is coregular then $G$ is a pseudo-reflection group and the
direct summand property holds. We conjectured that the converse also holds, cf.~\cite{Broer2005}. The theorem
of Chevalley-Shephard-Todd~\cite[Chapter 6]{Benson} says that the converse holds if the group is non-modular.
In this note we prove that the converse holds if $G$ is abelian. Elsewhere we show that the
converse is also  true if $V$ is an irreducible $kG$-module, cf.~\cite{BroerS}.

\begin{theorem}\label{maintheorem}
Suppose $G<\GL(V)$ is an {\em abelian} group acting on the finite dimensional vector space $V$. Then the action is coregular if and only if $G$ is a pseudo-reflection group and the
direct summand property holds.
\end{theorem}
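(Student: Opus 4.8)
Since the forward implication is precisely Serre's theorem quoted above, I only need to treat the converse: assuming $G$ is abelian, generated by pseudo-reflections, and has the direct summand property, I must show $k[V]^G$ is a polynomial ring. Let $p=\operatorname{char}k$; if $p\nmid|G|$ this is the classical Chevalley--Shephard--Todd theorem, so assume $p\mid|G|$ and write $G=P\times H$ with $P$ the Sylow $p$-subgroup and $H$ a complement of order prime to $p$ (both normal since $G$ is abelian). If $\tau=\tau_P\tau_H$ is the decomposition of a pseudo-reflection into its commuting $p$- and $p'$-parts, then $V^\tau\subseteq V^{\tau_P}$ forces $\dim V^{\tau_P}\ge n-1$, so $\tau_P$ is trivial or a transvection (a pseudo-reflection of determinant one, which in characteristic $p$ is automatically unipotent), and likewise $\tau_H$ is trivial or a diagonalisable pseudo-reflection; projecting the generators of $G$ shows $P$ is generated by transvections and $H$ is a non-modular pseudo-reflection group. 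The plan is then to reduce to the Sylow $p$-subgroup: transfer the direct summand property to $P$ (averaging over $H$, whose order is invertible), use the classical theorem to make $k[V]^H$ polynomial, and pass to $H$-eigen generators, so that everything comes down to the assertion \emph{if $P$ is an abelian transvection group with the direct summand property then $k[V]^P$ is polynomial} --- with the residual $H$-action remaining a reflection action, which the explicit description of $k[V]^P$ below should make visible. I expect this reduction to require some care (transferring the direct summand property to $P$ is not purely formal), but the real content lies in the transvection-group case.

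Next I would pin down the structure of an abelian transvection group. Writing transvections as $\tau_{\ell,v}\colon x\mapsto x+\ell(x)v$, a short computation shows that $\tau_{\ell,v}$ and $\tau_{\ell',v'}$ commute only when each of $v,v'$ lies in $\ker\ell\cap\ker\ell'$. Hence if $P=\langle\tau_{\ell_1,v_1},\dots,\tau_{\ell_r,v_r}\rangle$ is abelian, all the $v_i$ lie in $U:=\bigcap_i\ker\ell_i=V^P$, their span $W$ is contained in $U$, and $P$ acts trivially on $U$ and on $V/W$. In coordinates adapted to the flag $W\subseteq U\subseteq V$, with $s=\dim W$ and $u=\dim U$, each $g\in P$ fixes $x_{s+1},\dots,x_n$ and sends $x_j\mapsto x_j-\lambda_j(g)$ for $1\le j\le s$, where $\lambda_j\colon P\to L:=\langle x_{u+1},\dots,x_n\rangle$ is an additive homomorphism; in particular the matrices of $P$ form an $\mathbb F_p$-subspace, so $P$ is \emph{elementary abelian}, and $(\lambda_1,\dots,\lambda_s)$ embeds $P$ as an $\mathbb F_p$-subspace of $L^{\oplus s}$. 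Under this identification the hypothesis ``$P$ generated by transvections'' translates into ``the subspace $P\subseteq L^{\oplus s}$ is spanned over $\mathbb F_p$ by its rank-one tuples'', i.e.\ tuples all of whose entries are scalar multiples of a single element of $L$.

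Now set $R:=k[x_{s+1},\dots,x_n]$, $\Lambda_j:=\lambda_j(P)\subseteq L$, $d_j:=\dim_{\mathbb F_p}\Lambda_j$, and $m:=\dim_{\mathbb F_p}P$. Since $(\lambda_j)$ embeds $P$ into $\bigoplus_j\Lambda_j$ we always have $\sum_j d_j\ge m$; I call $P$ \emph{uncoupled} when equality holds. The orbit of $x_j$ under $P$ is the coset $x_j-\Lambda_j$, so $N_j:=\prod_{\mu\in\Lambda_j}(x_j-\mu)\in k[V]^P$ is monic of degree $p^{d_j}$ in $x_j$, with coefficients in $R$, and involves no other moved coordinate. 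In the uncoupled case $R[N_1,\dots,N_s]$ is a polynomial ring (the $N_j$ are algebraically independent over $R$), $k[V]$ is module-finite over it of rank $\prod_j p^{d_j}=p^m=|P|=[k(V):k(V)^P]$, hence $k(V)^P=\operatorname{Frac}(R[N_1,\dots,N_s])$; as $R[N_1,\dots,N_s]$ is normal and $k[V]^P$ is integral over it, we conclude $k[V]^P=R[N_1,\dots,N_s]$, which is polynomial. So the whole theorem reduces to showing that the direct summand property can only hold when $P$ is uncoupled.

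The main obstacle is exactly this: I must show that a \emph{coupled} abelian transvection group $P$ never has the direct summand property. A first consequence of that property is that $k[V]^P$ is Cohen--Macaulay (it is a direct summand of the maximal Cohen--Macaulay $k[V]^P$-module $k[V]$), and a Hilbert-series computation then shows that a polynomial generating set of $k[V]^P$ would consist of $x_{s+1},\dots,x_n$ together with $s$ further generators whose degrees are powers of $p$ multiplying to $|P|$. I expect Cohen--Macaulayness alone to be insufficient, however --- coupled transvection groups can have Cohen--Macaulay (indeed hypersurface) invariant rings --- so the force of the direct summand property must be used through the cyclic purity of $k[V]^P\hookrightarrow k[V]$, i.e.\ $Ik[V]\cap k[V]^P=I$ for every ideal $I$ of $k[V]^P$. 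The hard part is therefore to convert a coupling relation $\sum_j\varphi_j\circ\lambda_j=0$ (with $\varphi_j\in\operatorname{Hom}_{\mathbb F_p}(\Lambda_j,\mathbb F_p)$ not all zero) into an explicit element of $k[V]$ lying in $Ik[V]\cap k[V]^P$ but not in $I$ for a suitable ideal $I$ built from the norms $N_j$, contradicting cyclic purity. Once this obstruction is established, the theorem follows by combining it with the uncoupled computation and the reduction of the first paragraph, both of which are comparatively routine.
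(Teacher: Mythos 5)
Your setup is careful and the uncoupled computation of $k[V]^P$ is correct, but the argument has a genuine hole at exactly the point you yourself flag as ``the main obstacle'': you never prove that a coupled abelian transvection group cannot have the direct summand property. In your framework that implication carries the entire weight of the converse once the uncoupled case is done, and it is not a routine verification: you only describe a strategy (turn a coupling relation $\sum_j\varphi_j\circ\lambda_j=0$ into a violation of cyclic purity for an ideal built from the norms $N_j$) without exhibiting the ideal, the element, or any argument that such an obstruction must exist. Note also that your notion of ``uncoupled'' depends on the chosen coordinates $x_1,\dots,x_s$ (changing the basis of $W$ replaces the $\lambda_j$ by $k$-linear combinations, which can change $\sum_j d_j$), so even the statement to be proved needs sharpening to ``uncoupled for some adapted basis''. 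A second, smaller gap is the reduction: you acknowledge that transferring the direct summand property from $G$ to its transvection part ``is not purely formal'' but do not carry it out; some real argument is needed there, since a $k[V]^G$-linear splitting does not formally restrict to a $k[V]^P$-linear one (the paper does this via the factorization of the different, $\theta_G=\theta_T\theta_D$, and $\Tr^G=\Tr^T\circ\Tr^D$).

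It may help to know that the missing obstruction can be bypassed entirely, which is how the paper argues. Your own commutation computation shows that for an abelian transvection group every linear form cutting out a transvection hyperplane is $G$-invariant; hence the ideal of $k[V]$ generated by the invariants lying in the ideal of $(V^G)^\perp$ is generated by $\operatorname{codim}V^G$ linear invariants, i.e.\ the Hilbert ideal relative to $V^G$ is a complete intersection, coupled or not. Combining this with Nakajima's lemma that $k[V]^G/I(V^G)^c$ is a polynomial ring shows the full Hilbert ideal $\Hilb$ is a complete intersection, and then the direct summand property is used only once, through Hilbert's argument: a homogeneous $k[V]^G$-linear splitting turns any generating set of $\Hilb$ into a generating set of $k[V]^G$. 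This yields coregularity directly, with no explicit description of $k[V]^P$ and no need to classify which transvection groups admit the splitting; in that logic your ``coupled $\Rightarrow$ no direct summand property'' claim is a consequence of the theorem rather than an input to it. As it stands, your proposal is an outline whose decisive step remains unproved.
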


As corollary we get a special case of a conjecture made by Shank--Wehlau, cf.~\cite{ShankWehlau1997}. 
Suppose the characteristic of the field is $p>0$.

\begin{corollary}
Let $G<\GL(V)$ be an abelian $p$-group acting linearly on the vector space $V$.
 The image of the transfer map $\Tr^G$ is a principal ideal in $k[V]^G$ if and only if the action is coregular.
\end{corollary}

\section{Hilbert ideal and the direct summand property.}
For elementary facts on the invariant theory of finite groups we refer to~\cite{Benson},
for a discussion of the direct summand property and the different see~\cite{Broer2005}. We recall that the different $\theta_G$ of the action can be defined as the largest degree homogeneous
form in $k[V]$ such that $\Tr^G(\frac{f}{\theta})\in k[V]^G$ for all $f\in k[V]^G$; it is unique up to a multiplicative scalar. The direct summand property holds if and only if there exists a $\tilde{\theta}_G$ such
that $\Tr^G(\frac{\tilde{\theta}_G}{\theta_G})=1$ and then we can take as $k[V]^G$-linear projection
$$\pi:k[V]\to k[V]^G:\ \pi(f):=\Tr^G(\frac{\tilde{\theta}_Gf}{\theta_G}).$$

 If $J\subseteq k[V]^G$ is an ideal, we
define $J^e:=I\cdot k[V]$, the ideal in $k[V]$ generated by $J$. If $I\subseteq k[V]$, we define $I^c:=I\cap k[V]^G$, the ideal in $k[V]^G$ generated by the invariants contained in $I$. An important
consequence of the direct summand property is that it implies $J=J^{ec}$, 
cf.~\cite[Proposition 6]{Broer2005}.

The {\em Hilbert ideal} $\Hilb\subset k[V]$ is the ideal generated by all positive degree homogeneous invariants. Hilbert already noticed that if the direct summand property holds then any
collection of homogeneous $G$-invariants generating the Hilbert ideal also generates
the algebra of invariants.
We say that the Hilbert ideal is a {\em complete intersection ideal}, if it can be generated by $n$ homogeneous invariants, where $n=\dim V$. Those invariants necessarily form a (very special) homogeneous system of parameters. We shall use the following criterion for coregularity.

\begin{proposition}\label{hilbert ideal}
The action is coregular  if and only if  the Hilbert ideal
$\Hilb$ is a complete intersection ideal and the direct summand property holds.
\end{proposition}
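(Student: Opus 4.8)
The plan is to prove the two implications separately, using the two consequences of the direct summand property recalled above: the existence of a graded $k[V]^G$-linear projection $\pi\colon k[V]\to k[V]^G$, and Hilbert's observation that any homogeneous invariants generating $\Hilb$ then generate $k[V]^G$ as a $k$-algebra.

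For the forward direction, suppose the action is coregular, say $k[V]^G=k[f_1,\dots,f_n]$ with the $f_i$ algebraically independent homogeneous invariants of positive degree. Serre's theorem quoted in the introduction already gives the direct summand property. For the Hilbert ideal, a positive-degree homogeneous invariant is a polynomial in the $f_i$ with no constant term, hence lies in $(f_1,\dots,f_n)k[V]$, whereas conversely each $f_i$ is itself a positive-degree invariant; thus $\Hilb=(f_1,\dots,f_n)k[V]$ is generated by $n=\dim V$ invariants, i.e.\ is a complete intersection ideal.

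For the converse, I would start from $\Hilb=(f_1,\dots,f_n)k[V]$ with $f_1,\dots,f_n$ homogeneous invariants of positive degree, together with the direct summand property. Given a homogeneous invariant $f$ of positive degree, write $f=\sum_i g_if_i$ in $k[V]$ with $g_i$ homogeneous of degree $\deg f-\deg f_i$; applying $\pi$ gives $f=\pi(f)=\sum_i \pi(g_i)f_i$ with $\pi(g_i)\in k[V]^G$ homogeneous of degree strictly less than $\deg f$, so induction on degree yields $k[V]^G=k[f_1,\dots,f_n]$ (this is just Hilbert's observation, made explicit). It remains to see the $f_i$ are algebraically independent: since $k[V]$ is integral over $k[V]^G$ the two rings have the same Krull dimension $n$, so the surjection $k[t_1,\dots,t_n]\twoheadrightarrow k[V]^G$, $t_i\mapsto f_i$, is a surjection between domains of equal dimension $n$ and hence has trivial kernel. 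Thus $f_1,\dots,f_n$ are algebraically independent and the action is coregular.

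Once the projection $\pi$ and Hilbert's observation are in hand, nothing here is difficult; the only step asking for a moment's care is the algebraic independence of the $f_i$, and it is precisely there that the hypothesis that $\Hilb$ needs only $n=\dim V$ generators—equivalently, that such generators form a homogeneous system of parameters—is used. The real work left by this proposition lies downstream: to establish the nontrivial direction of Theorem~\ref{maintheorem} one must show that an abelian pseudo-reflection group with the direct summand property has a complete intersection Hilbert ideal, after which this proposition converts that into coregularity.
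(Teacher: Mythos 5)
Your proof is correct and follows essentially the same route as the paper: coregularity gives the direct summand property (by the cited result) and exhibits $\Hilb$ as generated by the $n$ polynomial generators, while the converse is exactly Hilbert's projection argument using the graded $k[V]^G$-linear $\pi$. The only difference is cosmetic: you spell out the algebraic independence of the $f_i$ via the Krull-dimension argument, a point the paper disposes of beforehand by remarking that $n$ homogeneous generators of $\Hilb$ necessarily form a homogeneous system of parameters.
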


\begin{proof}
If the action is coregular, then $k[V]^G=k[f_1,\ldots,f_n]$ and so $\Hilb=(f_1,\ldots,f_n)$
is a complete intersection ideal. Coregularity  also implies the direct summand property,
cf.\cite[Proposition 5(ii)]{Broer2005}.

Conversely, suppose the direct summand property holds and $\Hilb=(f_1,\ldots,f_n)$, where
$f_1,\ldots,f_n$ are homogeneous invariants of positive degree. Now we recall Hilbert's argument showing that $R:=k[f_1,\ldots.f_n]$ is equal to $k[V]^G$. Suppose $R$ is not equal to $k[V]^G$.
Let then $f\in k[V]^G$ be of minimal degree such that $f$ is not in $R$. But $f\in \Hilb$, so there
are $h_1,\dots,h_n\in k[V]$ of degree strictly smaller than the degree of $f$,
 such that $f=h_1f_1+\ldots+h_nf_n$. By hypothesis there
is a $k[V]^G$-linear projection operator $\pi:k[V]\to k[V]^G$ respecting grading.  We can assume $\pi(1)=1$.
We use it to get
$$f=\pi(f)=\pi(h_1)f_1+\ldots+\pi(h_n)f_n.$$
Each $\pi(h_i)$ is now invariant and of strictly lower degree than $f$, hence is in $R$. But then
$f\in R$, which is a contradiction. It follows that $k[V]^G$ is generated by $f_1,\ldots,f_n$, and
so the action is coregular.
\end{proof}

Let $U\subseteq V^G$ be a linear subspace, and $U^\perp\subset V^*=k[V]_1$ the space
of linear forms vanishing on $U$. Let $I(U)$ be the ideal in $k[V]^G$ generated by $U^\perp$.
We shall define $\Hilb_U$, the {\em Hilbert ideal relative to $U$}, to be
$I(U)^{ce}$, i.e., $\Hilb_U$ is the ideal of $k[V]$ generated
by all the invariants contained in $I(U)$.
In particular, for $U=\{0\}$ we
get the original Hilbert ideal $\Hilb$.
Let $s$ be the codimension of $U$ in $V$, then we say that $\Hilb_U$ is a {\em complete intersection ideal} if it can be generated by $s$ homogeneous invariants.

\begin{lemma}\label{U hilbert ideal}
Let $\Hilb_U$ be the Hilbert ideal relative to $U\subset V^G$.  If $\Hilb_U$ is a complete intersection ideal then the Hilbert ideal  $\Hilb$ is also a complete intersection ideal.
\end{lemma}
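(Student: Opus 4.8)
The plan is to produce $n=\dim V$ homogeneous invariants generating $\Hilb$, by adjoining $d:=\dim U$ well-chosen invariants to the $s:=n-d$ generators $f_1,\dots,f_s$ of $\Hilb_U$. First I would fix a basis $\ell_1,\dots,\ell_n$ of $V^*$ whose last $s$ elements span $U^\perp$; then $I(U)=(\ell_{d+1},\dots,\ell_n)$, the residues $y_i:=\ell_i|_U$ ($1\le i\le d$) are coordinates on $U$, and $k[U]=k[V]/I(U)=k[y_1,\dots,y_d]$. Write $\mathfrak P:=I(U)\cap k[V]^G$ for the ideal of invariants vanishing on $U$; then $\Hilb_U=\mathfrak P\,k[V]$, and since $\mathfrak P\,k[V]\subseteq I(U)$ one also has $\Hilb_U\cap k[V]^G=\mathfrak P$. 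Put $A:=k[V]^G/\mathfrak P$, the image of $k[V]^G$ under restriction to $U$; as $k[V]$ is module-finite over $k[V]^G$, both $k[U]$ and $R:=k[V]/\Hilb_U$ are module-finite over $A$, and $A\hookrightarrow k[U]$, $A\hookrightarrow R$.

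Next I would pin down the geometry of $\Hilb_U$. The prime $I(U)$ is $G$-stable because $U\subseteq V^G$ is; since $G$ acts transitively on the primes of $k[V]$ lying over a given prime of $k[V]^G$, $I(U)$ is the \emph{unique} prime of $k[V]$ over $\mathfrak P$, hence the unique minimal prime of $\mathfrak P\,k[V]$, so $\sqrt{\Hilb_U}=I(U)$ and $V(\Hilb_U)=U$. In particular $\operatorname{ht}\Hilb_U=\operatorname{codim}_V U=s$, so the hypothesis forces $f_1,\dots,f_s$ to be a regular sequence in the Cohen--Macaulay ring $k[V]$; thus $R$ is a complete intersection ring of dimension $d$. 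Because $V(\Hilb_U)=U$ and $\ell_1,\dots,\ell_d$ cut $U$ down to the origin, the residues $\bar\ell_1,\dots,\bar\ell_d$ form a homogeneous system of parameters of $R$, hence (as $R$ is Cohen--Macaulay) a regular sequence; moreover the quotient $R\twoheadrightarrow R/\mathfrak p=k[U]$, with $\mathfrak p:=I(U)/\Hilb_U$, carries the polynomial subring $S:=k[\bar\ell_1,\dots,\bar\ell_d]$ isomorphically onto $k[U]$. Thus $R\cong k[U]\oplus\mathfrak p$ as $S$-modules, and $R$ — hence $\mathfrak p$ — is free over $S\cong k[U]$.

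The heart of the argument, and the step I expect to be the real obstacle, is to show that $A$ is a polynomial ring on $d$ homogeneous generators. It is a finitely generated graded domain of dimension $d$ (since $\dim A=\dim k[U]=d$), and $A\hookrightarrow R$ makes $R$ a faithful finite $A$-module which, being Cohen--Macaulay of dimension $d=\dim A$, is a maximal Cohen--Macaulay $A$-module. One should be able to combine this with the complete-intersection and free structure of $R$ over $S\cong k[U]$ and the splitting $R\cong k[U]\oplus\mathfrak p$ to force $A$ to be regular, hence (being graded and connected) polynomial; I would expect a Hilbert-series or depth computation — comparing $\operatorname{Hilb}(R;t)=\prod_i(1-t^{\deg f_i})/(1-t)^n$ with $\operatorname{Hilb}(k[U];t)$ and the $A$-module structure of $R$ — to be what makes this work.

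Granting this, choose homogeneous invariants $g_1,\dots,g_d\in k[V]^G$ whose residues generate $A$ as a $k$-algebra, so the irrelevant ideal satisfies $A_+=(\bar g_1,\dots,\bar g_d)A$. For any homogeneous invariant $h$ of positive degree there are homogeneous invariants $c_1,\dots,c_d$ with $\bar h=\sum_i\bar c_i\bar g_i$ in $A$, whence $h-\sum_i c_i g_i\in\mathfrak P\subseteq\Hilb_U$; therefore $\Hilb=\Hilb_U+(g_1,\dots,g_d)=(f_1,\dots,f_s,g_1,\dots,g_d)$. So $\Hilb$ is generated by $n$ homogeneous invariants, i.e. it is a complete intersection ideal (and, as remarked before Proposition \ref{hilbert ideal}, those $n$ invariants then automatically form a homogeneous system of parameters).
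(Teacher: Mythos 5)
There is a genuine gap, and you have located it yourself: the assertion that $A=k[V]^G/(I(U)\cap k[V]^G)$ is a polynomial ring on $d$ homogeneous generators is the entire content of the lemma, and your proposal does not prove it — it only expresses the hope that the complete-intersection structure of $R=k[V]/\Hilb_U$, its freeness over $S\cong k[U]$, and a Hilbert-series or depth computation ``should'' force $A$ to be regular. No mechanism is given, and none of the listed facts is strong enough: a finite graded inclusion $A\hookrightarrow R$ with $R$ a complete intersection (even a polynomial ring) and $R$ maximal Cohen--Macaulay and faithful over $A$ does not make $A$ regular — compare $k[t^3,t^4,t^5]\subset k[t]$ — and the freeness you establish is over $S$, not over $A$, so it says nothing about the singularity of $A$. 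A further sign that the approach is aimed at the wrong mechanism: the statement ``$A$ is polynomial'' is true \emph{unconditionally} (it is Nakajima's lemma), with no hypothesis on $\Hilb_U$ at all, so one should not expect to extract it from the complete-intersection assumption by commutative algebra alone.

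The paper's proof supplies exactly the missing input, and it is invariant-theoretic rather than homological: since $U\subseteq V^G$, the additive group $U$ acts on $V$ by translations commuting with $G$, hence descends to the quotient $V/G$; it acts simply transitively on the image $\pi_G(U)$, so $\pi_G(U)\simeq U\simeq k^{n-s}$ is an affine space and its coordinate ring $k[V]^G/I(U)^c$ is a polynomial ring in $n-s$ variables. Your surrounding reductions are fine and in fact mirror the paper: your endgame (lift homogeneous generators of $A_+$ to invariants $g_1,\dots,g_d$ and conclude $\Hilb=\Hilb_U+(g_1,\dots,g_d)k[V]$, hence generated by $n$ elements) is precisely how the paper finishes, and your preliminary observations about $\sqrt{\Hilb_U}=I(U)$ and regular sequences are correct but are only needed because of the route you chose. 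To repair the proof you must insert an argument for Nakajima's result (the translation argument above, or a citation to it); without it the central step is missing.
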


\begin{proof}
We shall use that the quotient algebra $k[V]^G/I(U)^c$ is a polynomial ring,
a result due to Nakajima~\cite[Proof of Lemma 2.11]{Nakajima1983}. We recall the quick proof.

To prove this result we can suppose that $k$ is algebraically closed so that we can use the language of algebraic geometry.
Let $\pi_G:V\to V/G$ be the quotient map. The linear algebraic group $U$ acts on $V$ by translations:
$$U\times V\to V: (u,v)\mapsto  u+v.$$ Since $U\subseteq V^G$, the translations commute with the $G$-action on
$V$, hence the $U$-action on $V$ descends to an action on the quotient variety
$$U\times V/G\to V/G: (u,\pi_G(v))\mapsto \pi_G(u+v).$$
It acts simply transitive on itself and on its image $\pi_G(U)$ in $V/G$. So $\pi_G(U)$ is isomorphic
to $U\simeq k^{n-s}$, hence the coordinate ring of $\pi_G(U)$ is isomorphic to a polynomial
ring with $n-s$ variables. The coordinate ring of $V/G$ can be identified
with $k[V]^G$ and then $\pi_G(U)$ is defined by $I(U)^c$.  It follows that
$k[V]^G/I(U)^c$ is a polynomial ring in $n-s$ variables. This finishes the proof of Nakajima's result.

So we can find $n-s$ homogeneous invariants $f_{s+1}, f_{s+2},\ldots, f_n$ such that 
$$I(U)^c+(f_{s+1}, f_{s+2},\ldots, f_n)k[V]^G=k[V]^G_+,$$
the maximal homogeneous ideal of $k[V]^G$. So 
$$\Hilb=(k[V]^G_+)^e=I(U)^{ce}+(f_{s+1}, f_{s+2},\ldots, f_n)k[V]=\Hilb_U+(f_{s+1}, f_{s+2},\ldots, f_n)k[V].$$
Now if  $\Hilb_U$ is a complete intersection ideal, hence generated by $s$ elements,
it follows that $\Hilb$ is generated by $n$ elements and is also a complete intersection ideal.
\end{proof}

\section{Abelian transvection groups}
For any pseudo-reflection $\rho$ on $V$ there is a vector $e_\rho\in V$ such that $(\rho -1)(V)=k e_\rho$
and a functional $x_{\rho}\in V^*$ such that
$\rho(v)-v=x_{\rho}(v)e_{\rho}.$
Then $v\in V^\rho$ if and only if $x_{\rho}(v)=0$, or $x_{\rho}$ is a linear form defining the
fixed-points set $V^\rho$.
There also is
a unique linear map $\Delta_\rho:k[V]\to k[V]$ such that for $f\in k[V]$:
$$\rho(f)-f=\Delta_{\rho}(f) x_{\rho}.$$

The pseudo-reflection is called a {\em transvection} if $\rho(e_\rho)=e_{\rho}$, i.e., $e_\rho\in V^{\rho}$,
or equivalently if $\Delta_\rho(x_\rho)=0$. The fixed-points set $V^\rho$ is then called a transvection
hyperplane.
Otherwise the pseudo-reflection is diagonalisable over $k$, and called {\em homology}, i.e., there is a basis of $V$ consisting of eigenvectors.
A {\em transvection group} is a group generated by transvections.

\begin{proposition}\label{transvectiongroups}
Let $G$ be a finite abelian transvection group acting on $V$.

(i) $\Hilb_{V^G}$ is a complete intersection ideal, where $\Hilb_{V^G}$ is the Hilbert ideal relative to $V^G$.

(ii) $G$ is an abelian  $p$-group, where $p$ is the characteristic of the field.
\end{proposition}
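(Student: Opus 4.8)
The plan is to prove (ii) first by an elementary computation of orders, and then to deduce (i) from the key structural fact that, for an \emph{abelian} transvection group, $V_G:=\sum_{g\in G}(g-1)V$ is contained in $V^G$. For (ii): if $G=1$ there is nothing to prove, so assume $G$ contains a nonidentity transvection $\rho$. From $\rho(v)-v=x_\rho(v)e_\rho$ together with $x_\rho(e_\rho)=0$ one obtains $\rho^m(v)-v=m\,x_\rho(v)e_\rho$ for all $m\geq 1$, so $\rho$ has infinite order when $k$ has characteristic $0$; since $G$ is finite this forces the characteristic to be $p>0$, and then $\rho^p=1$. Hence every transvection in $G$ has order $1$ or $p$, and an abelian group generated by elements of order dividing $p$ is an elementary abelian $p$-group, so $G$ is an abelian $p$-group.

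For (i), the main point is the claim $V_G\subseteq V^G$, which I would prove through a commutation identity. Writing a transvection as $\rho=1+e_\rho\otimes x_\rho$ (shorthand for the operator $v\mapsto x_\rho(v)e_\rho$), a direct computation gives, for any transvections $\rho,\sigma$,
\[
\rho\sigma-\sigma\rho=x_\rho(e_\sigma)\,(e_\rho\otimes x_\sigma)-x_\sigma(e_\rho)\,(e_\sigma\otimes x_\rho).
\]
Since $G$ is abelian the left-hand side is $0$. If $x_\sigma(e_\rho)\neq 0$, the two rank-one operators on the right must then be proportional and nonzero, which forces $e_\rho$ and $e_\sigma$ to be proportional, say $e_\sigma=c\,e_\rho$ with $c\neq 0$; but then $x_\sigma(e_\rho)=c^{-1}x_\sigma(e_\sigma)=0$, a contradiction. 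So $x_\sigma(e_\rho)=0$, i.e.\ $\sigma(e_\rho)=e_\rho$, for every pair of transvections $\rho,\sigma\in G$. Therefore the span $W:=\sum_\rho k\,e_\rho$ (over all transvections $\rho\in G$) lies in $\bigcap_\sigma V^\sigma=V^G$, the equality holding because $G$ is generated by its transvections; and since $(\rho-1)V=k\,e_\rho\subseteq W$ for each generator $\rho$, it follows that $(g-1)V\subseteq W$ for all $g\in G$, whence $V_G\subseteq W\subseteq V^G$.

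Granting this, (i) is immediate. Let $z_1,\dots,z_s$ be a basis of $(V^G)^\perp\subseteq V^*$, so $s=\dim V-\dim V^G$ and $I(V^G)=(z_1,\dots,z_s)$ is the ideal of $k[V]$ generated by $(V^G)^\perp$. Each $z_i$ vanishes on $V^G$, hence also on the smaller subspace $V_G$; since a linear form on $V$ is $G$-invariant exactly when it vanishes on $V_G$, every $z_i$ lies in $k[V]^G$. Thus the $z_i$ are invariants contained in $I(V^G)$, giving $(z_1,\dots,z_s)\subseteq \Hilb_{V^G}$; the reverse inclusion is automatic, since any invariant vanishing on $V^G$ already lies in the ideal $(z_1,\dots,z_s)$ of that linear subvariety. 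Hence $\Hilb_{V^G}=(z_1,\dots,z_s)$ is generated by $s$ homogeneous invariants of degree one, which extend to a coordinate system on $V$ and so form a regular sequence; in particular $\Hilb_{V^G}$ is a complete intersection ideal. The only step with genuine content is the commutation computation and the inclusion $V_G\subseteq V^G$ it produces; after that the argument is formal, the one delicate point being to rule out the degenerate configuration $e_\rho\parallel e_\sigma$ in that computation.
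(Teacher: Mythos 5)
Your proof is correct, and it reroutes both halves in ways worth noting. For (i), your argument and the paper's rest on the same underlying commutation computation for a pair of transvections; the difference is which side of the pairing you keep. The paper applies the operators $\Delta_1,\Delta_2$ to linear forms and concludes that each defining form $x_\tau$ is $G$-invariant, so that $(V^G)^\perp=\langle x_\tau;\ \tau\in\Tau\rangle$ is spanned by invariant linear forms directly; you instead work with the rank-one operators $e_\rho\otimes x_\rho$ and conclude that every vector $e_\rho$ is fixed by $G$, then dualize via $V_G:=\sum_{g\in G}(g-1)V\subseteq V^G$ and the observation that a linear form is $G$-invariant exactly when it vanishes on $V_G$. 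These are dual formulations of the same fact, and both yield $\Hilb_{V^G}=I(V^G)$ generated by $\operatorname{codim}(V^G)$ invariant linear forms. (Your step ``$(\rho-1)V\subseteq W$ for the generators implies $(g-1)V\subseteq W$ for all $g$'' is asserted rather than argued, but it is the standard identity $gh-1=g(h-1)+(g-1)$ combined with $W\subseteq V^G$, so there is no gap; in fact, since each $e_\tau\in V^G$, one could also check invariance of any $z\in(V^G)^\perp$ generator by generator, as $\tau^{-1}v=v-x_\tau(v)e_\tau$ gives $z(\tau^{-1}v)=z(v)-x_\tau(v)z(e_\tau)=z(v)$.) For (ii) your route is genuinely different and more elementary: the paper deduces (ii) from (i), extending the field to produce an eigenvector of an element of order prime to $p$ and contradicting the invariance of some $x_\tau$, whereas you use only the unipotence relation $\rho^m=1+m\,(e_\rho\otimes x_\rho)$. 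This makes (ii) independent of (i), avoids the field extension, and yields the slightly stronger conclusion that an abelian transvection group is \emph{elementary} abelian; the paper's eigenvalue argument, by contrast, highlights exactly how non-$p$-elements fail to commute with the transvection structure, which foreshadows the splitting $G=T\times D$ used later, but your order argument is shorter and self-contained.
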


\begin{proof}
(i) Let $r_1$ and $r_2$ be two transvections in $G$, whose fixed-point sets are defined by the two linear forms $x_1$ and $x_2$. Then for any $f\in k[V]$ there is a unique
$\Delta_1(f)$ and $\Delta_2(f)$ such that $r_i(f)=f+\Delta_i(f)x_i$, for $i=1,2$.
Since the $r_i$ are transvections we have $\Delta_i(x_i)=0$.
For any linear form $y$ we have that $\Delta_i(y)$ is a scalar and
\begin{eqnarray*}
r_1(r_2(y))&=&r_1\left(y+\Delta_2(y)x_2\right)\\
&=&y+\Delta_1(y)x_1+\Delta_2(y)x_2+\Delta_2(y)\Delta_1(x_2)x_1\\
r_2(r_1(y))&=&r_2\left(y+\Delta_1(y)x_1\right)\\
&=&y+\Delta_2(y)x_2+\Delta_1(y)x_1+\Delta_1(y)\Delta_2(x_1)x_2.
\end{eqnarray*}
Since $G$ is abelian we get for all $y\in V^*$ that
$$\Delta_2(y)\Delta_1(x_2)x_1=\Delta_1(y)\Delta_2(x_1)x_2.$$

If $x_1$ and $x_2$ are dependent then $\Delta_i(x_j)=0$. Supposing they are independent we get
$\Delta_2(y)\Delta_1(x_2)=0$ for all linear forms $y$, hence $\Delta_1(x_2)=0$. Similarly $\Delta_2(x_1)=0$.
Therefore we get $r_i(x_j)=x_j$. Since our group is an abelian transvection group, it follows that any linear form defining a transvection hyperplane is a $G$-invariant linear form.

Let $\Tau\subset G$ be the collection of transvections in $G$.
For any  $\tau\in \Tau$ fix $x_\tau$ as above. Since the transvections generate $G$ we get
$$(V^G)^{\perp}=(\cap_{\tau \in \Tau} V^\tau)^\perp=
\sum_{\tau \in \Tau} (V^\tau)^\perp=\sum_{\tau \in \Tau}<x_\tau>=<x_\tau;\ \tau\in\Tau>.$$
Since we just proved that each $x_{\tau}\in (V^*)^G\subseteq k[V]^G$ it follows
that $(V^G)^{\perp}$ is generated by linear invariants, say $x_1,\ldots,x_{n-s}$ and so
$\Hilb_{V^G}$ is a complete intersection ideal, since
$$I(V^G)=(x_1,\ldots,x_{n-s})=I(V^G)^{ce}=\Hilb_{V^G}.$$

(ii)
Suppose $G$ is not a $p$-group, then (by extending the field if necessary) there exists a $\sigma\in G$ and a linear form $y\in V^*$ such that $\sigma(y)=cy$, where $c\neq 1$. Since $G$ is generated
by transvections, there must be a transvection $\tau\in G$, with corresponding $x_\tau$ and 
$\Delta_\tau$, such that $\tau(y)\neq y$, or $\Delta_\tau(y)\neq 0$. Then
\begin{eqnarray*}
\sigma\tau(y)&=&\sigma(y+\Delta_\tau(y) x_\tau)=cy+\Delta_\tau(y)\sigma(x_\tau)\\
\tau\sigma(y)&=&\tau(cy)=cy+\Delta_\tau(y) cx_\tau.
\end{eqnarray*}
Comparing we get $\sigma(x_\tau)=cx_\tau$ and so $x_\tau\not\in (V^*)^G$, which contradicts (i).
So $G$ is a $p$-group.
\end{proof}

\section{Reduction to abelian transvection groups and diagonalisable pseudo-reflection groups}
The following lemma allows us to treat separately abelian transvection groups and diagonalisable pseudo-reflection groups. The first two parts were known to Nakajima~\cite[Proof of Proposition 2.1]{Nakajima1982}.

\begin{proposition}\label{reduction}
Let $G<\GL(V)$ be an abelian pseudo-reflection group $G$ acting on $V$.
Denote $T$ for  the subgroup of $G$ generated by the
transvections and $D$ for the subgroup generated by the homologies in $G$. 

(i) Then $D$ is a non-modular group, $T$ is a $p$-group and $G=T\times D$.

(ii) There is a direct sum decomposition of $kG$-modules $V=V^D\oplus V_D$,
where $D$ acts trivially on $V^D$ and $T$ acts trivially on $V_D$. For the invariant rings
we get
$$k[V]^G\simeq k[V^D]^T\otimes k[V_D]^D.$$
Consequently, the $G$-action on $V$ is coregular if and only if the $T$-action on $V^D$ (or on $V$)
and the $D$-action on $V_D$ (or on $V$) are coregular.

(iii) The direct summand property holds for the $G$-action on $V$ if and only if
the direct summand property holds for the $T$-action on $V^D$ (or $V$).
\end{proposition}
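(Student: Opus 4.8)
The plan is to prove the three parts in succession, the organising principle being that commutativity of $G$ forces the transvections and the homologies to act on complementary subspaces. \emph{For (i)}: a transvection $\tau$ is unipotent with $(\tau-1)^2=0$, hence of order $p$ when $\operatorname{char}k=p>0$ (in characteristic $0$ there are no transvections, so $T=\{1\}$ and the statement is classical); thus $T$ is a finite abelian group generated by $p$-elements and so a $p$-group — or one may invoke Proposition~\ref{transvectiongroups}(ii) applied to the abelian transvection group $T$. A homology $\rho$ acts by a scalar $c_\rho\neq1$ on a complement of its fixed hyperplane and trivially on that hyperplane, so its order equals that of $c_\rho$ in $k^\times$, which is prime to $p$ because a field of characteristic $p$ has no nontrivial $p$-power roots of unity; hence $D$, being abelian and generated by such elements, has order prime to $p$, i.e. is non-modular. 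One finishes with $T,D\normal G$ (since $G$ is abelian), $T\cap D=\{1\}$ (a $p$-group meets a $p'$-group trivially), and $TD=G$ (every pseudo-reflection in a generating set of $G$ is a transvection or a homology), so $G=T\times D$.

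\emph{For (ii)}: non-modularity of $D$ gives the $D$-equivariant idempotent $\varepsilon=|D|^{-1}\sum_{d\in D}d$ on $V$ with image $V^D$; put $V_D=\ker\varepsilon$, so $V=V^D\oplus V_D$ as $kD$-modules, and since $T$ commutes with $\varepsilon$ both summands are $T$-stable, hence $kG$-submodules, with $D$ trivial on $V^D$ by construction. The substantive step is that $T$ acts trivially on $V_D$: extending scalars so that $D$ is diagonalisable I would split $V$ into $D$-weight spaces $V_\chi$ (so $V^D=V_1$ and $V_D=\bigoplus_{\chi\neq1}V_\chi$); a transvection $\tau$ commutes with $D$, hence stabilises each $V_\chi$, and for $\chi\neq1$ some homology $\rho\in D$ has $\chi(\rho)\neq1$, which forces $V_\chi$ into the one-dimensional nontrivial eigenline of $\rho$, on which the unipotent $\tau$ must act as the identity. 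Hence $V_D\subseteq V^T$, so $k[V]=k[V^D]\otimes_k k[V_D]$ with $T$ acting only on the first and $D$ only on the second factor, giving $k[V]^G=k[V^D]^T\otimes_k k[V_D]^D$ and $k[V]^T=k[V^D]^T\otimes_k k[V_D]$. Since a tensor product over $k$ of graded connected algebras is a polynomial ring precisely when each factor is, and $k[V^D]$ and $k[V_D]$ are themselves polynomial, all the coregularity equivalences in (ii) follow.

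\emph{For (iii)}: I would first record that the direct summand property is unaffected by a trivial polynomial factor — if $H$ acts trivially on $W'$, a graded $k[W]^H$-linear projection $k[W]\to k[W]^H$ tensored with $\mathrm{id}_{k[W']}$ is one for $W\oplus W'$, and reducing it modulo the linear forms vanishing on $W$ recovers one for $W$ — so with $H=T$, $W=V^D$, $W'=V_D$ the direct summand property for $T$ on $V$ and on $V^D$ coincide. Then: if it holds for $T$ on $V^D$ via a projection $\pi_0$, the map $\pi_0\otimes\mathrm{id}_{k[V_D]}:k[V]\to k[V]^T$ followed by $|D|^{-1}\Tr^D:k[V]^T\to k[V]^G$ (the latter a $k[V]^G$-linear graded projection, as $D$ is non-modular) is a $k[V]^G$-linear graded projection $k[V]\to k[V]^G$, so the property holds for $G$ on $V$. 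Conversely, given a $k[V]^G$-linear graded projection $\pi_G$ on $k[V]$, let $q:k[V]\to k[V^D]$ be restriction along the $G$-stable inclusion $V^D\hookrightarrow V$ and $i:k[V^D]\hookrightarrow k[V]$ the pullback along the $G$-equivariant projection $V\to V^D$ with kernel $V_D$; then $q\circ i=\mathrm{id}$ and $i$ maps $k[V^D]^T$ into $k[V]^G$, so $q\circ\pi_G\circ i:k[V^D]\to k[V^D]^T$ is a graded $k[V^D]^T$-linear projection, giving the property for $T$ on $V^D$ and closing the cycle of implications.

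Most of this is routine once the pieces are in place; the two points with real content are the eigenline argument in (ii) — the only place where abelianness of $G$ and the transvection/homology dichotomy are genuinely used — and the descent in (iii), that the direct summand property for $G$ forces it for $T$. I expect the descent to be the main obstacle: everything rests on checking that $q\circ\pi_G\circ i$ stays a projection and stays $k[V^D]^T$-linear, which works precisely because $V^D\hookrightarrow V$ has a $G$-equivariant retraction and $i$ lands in $k[V]^G$; granting this, the remaining implications are formal, as is the repeated verification that tensoring, restriction and transfer preserve gradings and the requisite module-linearity.
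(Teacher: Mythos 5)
Your argument is correct, and while parts (i) and (ii) essentially retrace the paper's proof, part (iii) takes a genuinely different route. For (i) you deduce non-modularity of $D$ from the orders of the homologies (each equal to the order of its nontrivial eigenvalue, hence prime to $p$) rather than from simultaneous diagonalisability, and for (ii) your weight-space formulation --- $\tau$ commutes with $D$, hence preserves each nontrivial weight space $V_\chi$, which sits inside the one-dimensional nontrivial eigenline of some homology, so the unipotent $\tau$ acts there as the identity --- is a clean repackaging of the paper's explicit computation with $e_\tau$, $x_\tau$ and the eigenvector $v$; the content (commutativity plus one-dimensionality of the homology's nontrivial eigenspace) is the same. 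In (iii), however, the paper works with the different: it uses $\theta_G=\theta_T\theta_D$ (which rests on the factorization of the different into linear forms cutting out reflecting hyperplanes), the criterion that the direct summand property amounts to $\Tr^G(\tilde\theta_G/\theta_G)=1$, the element $\hat\theta_T:=\Tr^D(\tilde\theta_G/\theta_D)$ for the descent to $T$, and a degree argument to replace $\hat\theta_T$ by some $\tilde\theta_T\in k[V^D]$; the converse is the explicit element $\tilde\theta_G:=|D|^{-1}\theta_D\tilde\theta_T$. You instead manipulate graded projections directly: your easy direction, composing $\pi_0\otimes\mathrm{id}$ with $|D|^{-1}\Tr^D$, is structurally the paper's converse in disguise, but your descent $q\circ\pi_G\circ i$, exploiting the $G$-equivariant retraction of $V$ onto $V^D$ (with $q\circ i=\mathrm{id}$, $i(k[V^D]^T)\subseteq k[V]^G$, $q$ a ring map, and $\pi_G$ normalized so that $\pi_G(1)=1$), avoids the different altogether and is checked correctly; it is more elementary and self-contained, whereas the paper's approach buys explicit transfer elements and stays inside the framework of \cite{Broer2005} used elsewhere in the paper. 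Both establish all the stated equivalences, including the ``on $V$ versus on $V^D$'' comparison, which you handle by the trivial-factor observation that is itself a special case of your retraction argument.
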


\begin{proof} 
(i) Since every generator of $D$ is diagonalisable over $k$ and $D$ is abelian, the group $D$ is simultaneously diagonalisable; in particular it is non-modular. Since $T$ is
an abelian transvection group, it is a $p$-group by Lemma~\ref{transvectiongroups}. So $T\cap D=\{1\}$
and $G=T\times D$.

(ii)
Let $V^D$ be the space of invariants and $V_D$ the  direct sum of the remaining eigenspaces
of $D$, so at least $V=V^D\oplus V_D$ as $kG$-modules.

If $\tau\in T$ then by commutativity also $\tau(v)\in V^D$,
so $V^D$ is a $kG$-submodule.

Let $\tau$ be transvection  with corresponding $e_\tau\in V$ and $x_\tau\in V^*$ such that
 $\tau(v)-v=\delta(v)e_\tau$, for any $v\in V$.
Let $\sigma$ be a homology and $\sigma v=c v$, where $v$ is the eigenvector for $\sigma$ with eigenvalue $c\neq 1$.
Then
$\tau\sigma v=\tau cv=cv +x_\tau(v) ce_\tau$ and
$\sigma \tau v=\sigma(v+x_\tau(v)e_\tau)=cv+x_\tau(v)\sigma(e_\tau)$.
Commutativity implies $x_\tau(v)(\sigma(e_\tau)-c e_\tau)=0$. If $x_\tau(v)\neq 0$
it follows that $e_\tau$ is an eigenvector for $\sigma$ with eigenvalue $c$. So $v$ is a scalar multiple
of $e_\tau$ (since $\sigma$ is a homology, the eigenspace with eigenvalue $c\neq 1$ is one-dimensional). But since $e_\tau\in V^\tau$ (since $\tau$ is a transvection) it follows that $\tau(v)=v$ and so $x_v(v)=0$. Contradiction.
So necessarily $x_\tau(v)=0$ and $\tau(v)=v$. 

Since the eigenvectors of homologies
with non-identity eigenvalue span $V_D$ (since those homologies generate $D$) it follows that $T$ acts trivially on $V_D$. In particular
$V_D$ is also a $kG$-submodule and $V=V^D\oplus V_D$ is a decomposition as $kG$-modules.

Let $y_1,\ldots,y_m$ be a basis of linear forms vanishing on $V_D$,
and $z_1,\ldots,z_{n-m}$ a basis of linear forms vanishing on $V^D$. So
$y_1,\ldots,y_m$ are coordinate functions on $V^D$, 
 $z_1,\ldots,z_{n-m}$  are coordinate functions on $V_D$ and
$$k[V]=k[y_1\ldots,y_m,z_1,\ldots,z_{n-m}]=k[y_1,\ldots,y_n]\otimes k[z_1,\ldots,z_{n-m}]=k[V_D]\otimes k[V^D].$$
For the invariants we get
$$k[V]^G\simeq k[V^D]^T\otimes k[V_D]^D.$$

(iii) The different of the $G$-action $\theta_G$ is a product of linear forms $x_\alpha$, where the zero-set of
$x_\alpha$, say $V_\alpha:=\{v\in V; x_\alpha(v)=0\}$, is the fixed-point set of a pseudo-reflection, cf.~\cite[Proposition 9]{Broer2005}.
The same for $\theta_T$ and $\theta_D$.
If $\tau$ is a transvection, then $V^\tau\supset V_D$; if $\tau$ is a diagonalisable then
$V^\tau\supset V^D$. It follows that 
$\theta_T\in k[y_1,\ldots,y_m]=k[V^D]$ and
$\theta_D\in k[z_1,\ldots,z_{n-m}]\in k[V_D]$ and
$\theta_G=\theta_T\cdot \theta_D$. In particular $T$ acts trivially on $\theta_D$ and 
$D$ acts trivially on $\theta_T$.

Suppose the direct summand property holds for the $G$-action, i.e, there exists
a $\tilde{\theta}_G\in k[V]$ such that $\Tr^G(\frac{\tilde{\theta}_G}{\theta_G})=1$.
Put $\hat{\theta}_T:=\Tr^D(\frac{\tilde{\theta}_G}{\theta_D})$, then
$$\Tr^T\left(\frac{\hat{\theta}_T}{\theta_T}\right)=
\Tr^T\left({\frac{1}{\theta_T}} \Tr^D(\frac{\tilde{\theta}_G}{\theta_D})\right)=
\Tr^T\left(\Tr^D(\frac{\tilde{\theta}_G}{\theta_D})\right)=
1,$$
since $\Tr^G=\Tr^T\circ\Tr^D$ and $\theta_T$ is $D$-invariant.
So the direct summand property holds for the $G$-action $V$.

Suppose that $\hat{\theta}_T$ is not in $k[V^D]=k[y_1,\ldots,y_n]$, so we can write 
$$\hat{\theta}_T=\tilde{\theta}_T+ \sum_{i=1}^{n-m} z_i f_i,$$
where $\tilde{\theta}_T\in k[V^D]$ and $f_i\in k[V]$. 
Then 
$$1=\Tr^T\left(\frac{\hat{\theta}_T}{\theta_T}\right)=\Tr^T\left(\frac{\tilde{\theta}_T}{\theta_T}\right)+
 \sum_{i=1}^{n-m} z_i\Tr^T\left(\frac{f_i}{\theta_T}\right)=\Tr^T\left(\frac{\tilde{\theta}_T}{\theta_T}\right),$$
 since $\Tr^T\left(\frac{f_i}{\theta_T}\right)$ is of negative degree, hence $0$. It follows
 that the direct summand property also holds for the $T$-action on $V^D$.
 
 Conversely, suppose the direct summand property holds for the $T$-action on $V$ then
 by the foregoing argument  the direct summand property also holds for the $T$-action on $V^D$).
 Hence there is a $\tilde{\theta}_T\in k[y_1,\dots,y_m]$ such that 
 $\Tr^T\left(\frac{\tilde{\theta}_T}{\theta_T}\right)=1$. 
 Put $$\tilde{\theta}_G:=|D|^{-1}\cdot\theta_D\cdot \tilde{\theta}_T,$$ this makes sense since $D$ is non-modular.
 Then
  $$\Tr^G\left(\frac{\tilde{\theta}_G}{\theta_G}\right)=
  \Tr^T\circ \Tr^D\left(\frac{|D|^{-1}\cdot\theta_D\cdot \tilde{\theta}_T}{\theta_T\cdot\theta_D}\right)
  = \Tr^T\left(\frac{\tilde{\theta}_T}{\theta_T}   \Tr^D(|D|^{-1})\right)=1$$
  and so the direct summand property also holds for the $G$-action on $V$.
\end{proof}

\section{Proofs of main results}
We now prove our main theorem and its corollary.

\begin{theorem}
Suppose $G<\GL(V)$ is an {\em abelian} group acting on the finite dimensional vector space $V$. Then the action is coregular if and only if $G$ is a pseudo-reflection group and the
direct summand property holds.
\end{theorem}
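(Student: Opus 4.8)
The plan is to establish only the nontrivial direction, since one implication is already in hand: by Serre's theorem (recalled in the introduction) coregularity forces $G$ to be a pseudo-reflection group and to enjoy the direct summand property. So from now on I assume that $G$ is an abelian pseudo-reflection group with the direct summand property, and I aim to deduce that the action is coregular.

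The first move is to dismantle $G$ via Proposition~\ref{reduction}. Let $T$ be the subgroup generated by the transvections in $G$ and $D$ the subgroup generated by the homologies; then $G=T\times D$, and there is a $kG$-decomposition $V=V^D\oplus V_D$ on which $D$ acts trivially on the first summand and $T$ acts trivially on the second, with $k[V]^G\simeq k[V^D]^T\otimes k[V_D]^D$. By Proposition~\ref{reduction}(ii) it is enough to show that the $T$-action on $V^D$ and the $D$-action on $V_D$ are each coregular, and I treat these two pieces separately.

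For $D$ acting on $V_D$: Proposition~\ref{reduction}(i) tells us $D$ is non-modular, and each generating homology $\sigma$ of $G$, being diagonalisable with $V^\sigma\supseteq V^D$, restricts to a homology of $V_D$ (its fixed space meets $V_D$ in a hyperplane of $V_D$); hence $D$ acts on $V_D$ as a non-modular pseudo-reflection group, and the classical Chevalley--Shephard--Todd theorem gives coregularity of this action outright. For $T$ acting on $V^D$: here $T$ is a finite abelian transvection group acting on $V^D$, with fixed space $(V^D)^T=V^G$. Applying Proposition~\ref{transvectiongroups}(i) to $T$ acting on $V^D$ shows that the Hilbert ideal relative to $V^G$ is a complete intersection ideal, so Lemma~\ref{U hilbert ideal} promotes this to the statement that the full Hilbert ideal of the $T$-action on $V^D$ is a complete intersection ideal. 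On the other hand, Proposition~\ref{reduction}(iii) transports our standing hypothesis (direct summand property for $G$) to the direct summand property for the $T$-action on $V^D$. Proposition~\ref{hilbert ideal} now yields that the $T$-action on $V^D$ is coregular, and feeding both conclusions back into Proposition~\ref{reduction}(ii) finishes the argument.

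I do not expect a genuine obstacle in this final assembly: the substance has already been extracted in the earlier sections, in particular Proposition~\ref{transvectiongroups}(i), whose crux is that commutativity forces every transvection hyperplane of an abelian transvection group to be cut out by an \emph{invariant} linear form, so the relative Hilbert ideal is generated by linear invariants, together with Nakajima's polynomiality theorem underlying Lemma~\ref{U hilbert ideal}. The points to keep straight are merely that the hypotheses of Proposition~\ref{reduction} match exactly (abelian pseudo-reflection group), that coregularity really does pass through the tensor factorization $k[V]^G\simeq k[V^D]^T\otimes k[V_D]^D$, and that $(V^D)^T$ coincides with $V^G$, so that Proposition~\ref{transvectiongroups}(i) delivers precisely the relative Hilbert ideal required by Lemma~\ref{U hilbert ideal}.
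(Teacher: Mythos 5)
Your proposal is correct and follows essentially the same route as the paper: Serre's theorem for the forward direction, then the splitting $G=T\times D$ with $k[V]^G\simeq k[V^D]^T\otimes k[V_D]^D$ from Proposition~\ref{reduction}, the classical Chevalley--Shephard--Todd theorem for $D$ on $V_D$, and Proposition~\ref{transvectiongroups}(i) together with Lemma~\ref{U hilbert ideal}, Proposition~\ref{reduction}(iii) and the criterion of Proposition~\ref{hilbert ideal} for $T$ on $V^D$. Your extra checks (that the homologies restrict to pseudo-reflections of $V_D$ and that $(V^D)^T=V^G$) are points the paper leaves implicit, and they are verified correctly.
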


\begin{proof}
Even when $G$ is not abelian by Serre it is generally true that if the action is coregular
then $G$ acts as a pseudo-reflection group and the direct summand property holds, cf.~\cite{Broer2005}.

Suppose that $G$ is an abelian pseudo-reflection group and the direct summand property holds.
Then $G=T\times D$, where $T$ is the subgroup generated by transvections and $D$ the
subgroup generated by diagonalisable reflections, as in Proposition~\ref{reduction}. We use
the notation of that lemma . Since $D$ is a non-modular pseudo-reflection group acting on $V_D$, it follows
from the classical Chevalley-Shephard-Todd theorem that $k[V_D]^D$ is a polynomial ring.
From Proposition~\ref{reduction} it also follows that $T$ is an abelian transvection group
acting on $V^D$ and that this action has the direct summand property. From
Proposition~\ref{transvectiongroups} and Lemma~\ref{U hilbert ideal} it follows that the Hilbert ideal $\Hilb$ of this action is a complete intersection ideal. So by the criterion in
Proposition~\ref{hilbert ideal} it follows that the $T$-action on $V^D$ is coregular, and
so $k[V^D]^T$ is a polynomial ring. So $k[V]^G=k[V^D]^T\otimes k[V_D]^D$ (see 
Proposition~\ref{reduction} again) is a polynomial ring. Hence the $G$-action
is coregular.
\end{proof}

We get a special case of Shank-Wehlau's conjecture~\cite{ShankWehlau1997}.

\begin{corollary}
Let $G<\GL(V)$ be an abelian $p$-group acting linearly on the vector space $V$.
 The image of the transfer map $\Tr^G$ is a principal ideal in $k[V]^G$ if and only if the action is coregular.
\end{corollary}

\begin{proof}
In~\cite{Broer2005} it was already shown for $p$-groups that the direct summand property
holds if and only if the image of the transfer map $\Tr^G$ is a principal ideal in $k[V]^G$ and
that this condition implies that $G$ is a transvection group, and if $G$ is abelian the
theorem above implies that the action is even coregular. 
Conversely, if the action is coregular, then the direct summand property holds and
the image of the transfer is a principal ideal.
\end{proof}

\begin{example}
The simplest example of an abelian  transvection group that satisfies neither the direct summand
property nor the coregularity property is the following.
Take $p=2$, $k=\F_2$, $G=<\sigma_1,\sigma_2,\sigma_3>\simeq (\Z/2\Z)^3$, $V=\F_2^3$ and the action is defined by the three matrices
$$\sigma_1=
\left(\begin{matrix}1&0&0&0\\ 0&1&0&0 \\ 1&0&1&0\\ 0&0&0&1\end{matrix}\right);\ 
\sigma_2=
\left(\begin{matrix}1&0&0&0\\ 0&1&0&0 \\ 0&0&1&0\\ 0&1&0&1\end{matrix}\right);\ 
\sigma_3=
\left(\begin{matrix}1&0&0&0\\ 0&1&0&0 \\ 1&1&1&0\\ 1&1&0&1\end{matrix}\right).
$$
In fact $\sigma_1$, $\sigma_2$ and $\sigma_3$ are the only transvections in the group, with transvection hyperplanes defined by $x_1$, $x_2$ and $x_1+x_2$ respectively. So the ideal $I$ defining $V^G$ is $I=(x_1,x_2)$ and the Dedekind different is $\theta=x_1x_2(x_1+x_2)$.
A minimal generating  set of invariants is, cf.~\cite{Nakajima1979}, $x_1$, $x_2$ and
\begin{eqnarray*}
f_3&:=&x_1x_3(x_1+x_3)+x_2x_4(x_2+x_4);\\
N(x_3)&=&x_3(x_3+x_1)(x_3+x_2)(x_3+x_1+x_2);\\
N(x_4)&=&x_4(x_4+x_1)(x_4+x_2)(x_4+x_1+x_2).
\end{eqnarray*}
There is one generating relation among the generators.

The Hilbert ideals  are complete intersection ideal
$\Hilb=(x_1,x_2,N(x_3),N(x_4))$, and $\Hilb_{V^G}=(x_1,x_2)$. 
But the direct summand property does not hold, since if it would hold we would have
for $J=(x_1,x_2)k[V]^G$ that $J=J^{ec}$, but $J^{ec}=(x_1,x_2,f_3)k[V]^G$. 
Or more directly, a calculation shows that if $f\in k[V]$ is of degree $3$ then
$\Tr^G(\frac{f}{\theta_G})=0$.
\end{example}

\end{document}